\begin{document}
\def\K{\mathbb{K}}
\def\R{\mathbb{R}}
\def\C{\mathbb{C}}
\def\Z{\mathbb{Z}}
\def\Q{\mathbb{Q}}
\def\D{\mathbb{D}}
\def\N{\mathbb{N}}
\def\T{\mathbb{T}}
\def\P{\mathbb{P}}
\def\A{\mathscr{A}}
\def\CC{\mathscr{C}}
\renewcommand{\theequation}{\thesection.\arabic{equation}}
\newtheorem{theo}{Theorem}[section]
\newtheorem{lemma}{Lemma}[section]
\newtheorem{coro}{Corollary}[section]
\newtheorem{prop}{Proposition}[section]
\newtheorem{definition}{Definition}[section]
\newtheorem{remark}{Remark}[section]

\newtheorem{example}{Example}[section]
\newtheorem{notation}{Notation}
\newtheorem{con}{Consequence}
\bibliographystyle{plain}
\theoremstyle{plain}

\title[Solving the $\partial \bar{\partial}$ with prescribed support~]{\textbf{Solving the $\partial \bar{\partial}$ with prescribed support}}
\author[W.\ O.\  Ingoba  \& S.\  Sambou ]{ Winnie Ossete Ingoba \& Souhaibou Sambou }
\address{ Departement of Mathematics\\ Faculty of Science and Technology \\ University Marien Ngouabi, (Congo)}
\email{wnnossete@gmail.com}
\address{ Departement of Mathematics\\  UFR of Applies Sciences and Technology \\  University Gaston Berger  of Saint-Louis,  BP: 234 (Senegal)}
\email{souhaibou.sambou@ugb.edu.sn}
%\address{Departement of Mathematics\\UFR of Sciences and Technologies \\ University Assane Seck  of Ziguinchor, BP: 523 (Senegal)}
%\email{s.diatta1375@zig.univ.sn }
%\author[S.\  Sambou ]{ Souhaibou Sambou }
%\address{Departement of Mathematics\\UFR of Sciences and Technologies \\  University Assane Seck  of Ziguinchor, BP: 523 (Senegal)}
%\email{s.sambou1440@zig.univ.sn }
%\author[E.\  Bodian ]{ Eramane Bodian }
%\address{Departement of Mathematics\\UFR of Sciences and Technologies \\  University Assane Seck  of Ziguinchor, BP: 523 (Senegal)}
%\email{m.bodian@univ-zig.sn }
\author[S.\  Sambou ]{ Salomon Sambou }
\address{Departement of Mathematics\\UFR of Sciences and Technology \\ University Assane Seck  of Ziguinchor, BP: 523 (Senegal)}
\email{ssambou@univ-zig.sn }
%\author[S.\  Khidr ]{ Shaban Khidr }
%\address{Departement of Mathematics\\Faculty of Science\\University of Jeddah\\21589 Jeddah, Saudi Arabia}
%\email{skhidr@uj.edu.sa}
\subjclass{}

\maketitle
%%%%%%%%%%%%%%%%%%%%%%%%%%%%%%%%%%%%%%%%%%%%%%%%%%%%%%%%%%%%%%%%%%%%%%%%%%%%%%%%%%%%%%%%%%%%%%%%%%%%%%%

%%%%%%%%%%%%%%%%%%%%%%%%%%%%%%%%%%%%%%%%%%%%%%%%%%%%%%%%%%%%%%%%%%%%%%%%%%%%%%%%%%%%%%%%%%%%%%%%%%%%%%%%%%
%%%%%%%%%%%%%%%%%%%%%%%%%%%%%%%%%%%%%%%%%%%%%%%%%%%%%%%%%%%%%%%%%%%%%%%%%%%%%%%%%%%%%%%%%%%%%%%%%%%%%%%%%% 
\begin{abstract}

In this paper, we consider the problem of solving the $\partial \bar{\partial}$ with prescribed support for forms or currents in a domain $\Omega$  of an complex manifold $X$.
%%%%%%%%%%%%%%%%%%%%%%%%%%%%%%%%%%%%%%%%%%%%%%%%%%%%%%%%%%%%%%%%%%%%%%%%%%%%%%%%%%%%%%%%%%%%%%%%%%%%%%%%%%
\vskip 2mm
\noindent
\keywords{{\bf Mots cl\'es:} pluriharmonic function, forms with prescribed support, and the $(1,1)$-Bott-Chern cohomology group.}
\vskip 1.3mm
\noindent
%\textit{
{\bf Mathematics Subject Classification (2010)} . 32F32.
 
\end{abstract}

\section{Introduction}
Let $X$ be an complex manifold of complex dimension $n$ and $\Omega \subset \subset X$ be a domain. In this paper, we will consider the problem of solving the $\partial \bar{\partial}$ with prescribed support for forms or currents. The motivation of this work is to establish the analogue of the  Proposition $1.1$ of \cite{TS} for the $\partial \bar{\partial}$ operator. We take some consequences on the solving with prescribed support of the $(1,1)$-Bott-Chern cohomology group related with the vaniching of the De Rham and Dolbeault cohomology groups.  We ask the following question:\\
Let $T$ be a $(1,1)$-current or a $(1,1)$-differential form of class $C^\infty$ (Resp $L^p_{loc}$, $C^k$ ) $d$-closed  on $X$ with support in $\overline{\Omega}$, is there a distribution $S$ or a function $S$ of class $C^\infty$ (Resp $L^p_{loc}$, $C^k$) with support in $\overline{\Omega}$ such that $\partial \bar{\partial}S = T$? \\
To answer this question, we start by giving some properties of the support of the solution of the $\partial \bar{\partial}$ equation if it exists. We then give an extension results for pluriharmonic functions under certain vaniching hypotheses of the De Rham cohomology groups $H^2_{\overline{\Omega}, cour}(X)$ (Resp $H^2_{\overline{\Omega}, L^p_{loc}}(X)$, $H^2_{\overline{\Omega}, C^k}(X)$) and the Dolbeault $H^{0,1}_{\overline{\Omega}, cour}(X)$ (Resp $H^{0,1}_{\overline{\Omega}, L^p_{loc}}(X)$, $H^{0,1}_{\overline{\Omega}, C^k}(X)$). Finally, we answer the question with the hypotheses $H^2(X) = H^{0,1}(X) = 0$ and the extension of pluriharmonic functions. As a last result, we have the solution of the $\partial \bar{\partial}$ equation with compact support for $(1,1)$-currents $d$-closed with as hypotheses $H^2_c(X) = H^{0,1}_c(X) = 0$.
\section{Definition and Notations}
\begin{definition}{(Cf \cite{WG})}
A complex manifold $X$ is called cohomologically $q$-complete if its $k$-th cohomology group with coefficients in $\mathcal{F}$, $H^k(X, \mathcal{F})$, vanishes for every coherent analytic sheaf $\mathcal{F}$ over $X$ and integer $k \geq q$.

\end{definition}
We denote by $H^2_{\overline{\Omega}, cour}(X)$ (Resp $H^2_{\overline{\Omega}, L^p_{loc}}(X)$, $H^2_{\overline{\Omega}, C^k}(X)$) the $2$-th De Rham cohomology group of currents (Resp of differential forms of class $C^k$ or $L^p_{loc}$) with support in $\overline{\Omega}$, $H^2_c(X)$ the $2$-th De Rham cohomology group of differential forms with compact support. And by $H^{0,1}_{\overline{\Omega}, cour}(X)$ (Resp $H^{0,1}_{\overline{\Omega}, L^p_{loc}}(X)$, $H^{0,1}_{\overline{\Omega}, C^k}(X)$) the $(0, 1)$-th Dolbeault cohomology group of currents (Resp of differential forms of class $C^k$ or $L^p_{loc}$) with support in $\overline{\Omega}$, $H^{0,1}_c(X)$ the $(0,1)$-th Dolbault cohomology group of differential forms with compact support.
\section{Properties of the support and uniqueness of the solution}
Let $X$ be a complex manifold and $\Omega$ be a domain such that $\overline{\Omega}$ is strictly included in $X$.
 Therefore $\Omega^c = X \setminus \overline{\Omega}$ is a non-empty open set of $X$. As a first result, we have the following proposition which is the analogue for the $\partial \bar{\partial}$ of Proposition $1.1$ of \cite{TS}.
\begin{prop} \label{e}
Let $T$ be a $(1,1)$-current $\partial \bar{\partial}$-exact on $X$. If $\Omega^c$ is a connected component of $X \setminus SuppT$ and if $S$ is a distribution on $X$ such that $\partial \bar{\partial} S = T$, then $Supp S \cap \Omega^c = \emptyset$ or $\Omega^c \subset Supp S$.
\end{prop}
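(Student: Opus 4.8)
The plan is to exploit the fact that $T$ vanishes on $\Omega^c$, which forces $S$ to be pluriharmonic there, and then to combine the real-analyticity of pluriharmonic distributions with the connectedness of $\Omega^c$ to obtain the dichotomy. First I would observe that, since $\Omega^c$ is a connected component of the open set $X \setminus \operatorname{Supp} T$, it is in particular disjoint from $\operatorname{Supp} T$, so $T \equiv 0$ on $\Omega^c$. The hypothesis $\partial\bar\partial S = T$ then gives $\partial\bar\partial S = 0$ on $\Omega^c$; that is, $S$ is a pluriharmonic distribution on $\Omega^c$.

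The crucial step is to upgrade this to a regularity statement. Working in local holomorphic coordinates $(z_1,\dots,z_n)$, the equation $\partial\bar\partial S = 0$ reads $\frac{\partial^2 S}{\partial z_j \partial \bar z_k} = 0$ for all $j,k$; summing the diagonal terms gives $\Delta S = 0$, so $S$ is harmonic. By elliptic regularity (Weyl's lemma) $S$ is smooth, and since in these coordinates the Laplacian has real-analytic (indeed constant) coefficients, analytic hypoellipticity of elliptic operators shows that $S$ is real-analytic. Because the transition maps of $X$ are holomorphic, hence real-analytic, this means $S$ is a genuine real-analytic function on the connected open set $\Omega^c$. I expect this regularity step --- passing from a merely distributional solution of $\partial\bar\partial S = 0$ to a real-analytic function --- to be the main point on which the whole argument rests; everything afterward is forced by connectedness.

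Finally I would run the unique continuation argument. Set $U := \Omega^c \setminus \operatorname{Supp} S$, the open subset of $\Omega^c$ on which $S$ vanishes. If $U = \emptyset$, then every point of $\Omega^c$ lies in $\operatorname{Supp} S$, i.e. $\Omega^c \subset \operatorname{Supp} S$. If instead $U \neq \emptyset$, then the real-analytic function $S$ vanishes on a nonempty open subset of the connected set $\Omega^c$, so by the identity theorem for real-analytic functions $S \equiv 0$ on all of $\Omega^c$, whence $\operatorname{Supp} S \cap \Omega^c = \emptyset$. In either case exactly one of the two stated alternatives holds, which proves the proposition. Once the real-analytic regularity of $S$ on $\Omega^c$ is secured, the dichotomy is immediate; the only delicate input is that regularity.
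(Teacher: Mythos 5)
Your proof is correct and takes essentially the same route as the paper's: you show that $\partial \bar{\partial} S = T = 0$ on $\Omega^c$ makes $S$ a pluriharmonic, hence real-analytic, function there, and then apply the identity principle for real-analytic functions on the connected open set $\Omega^c$ to get the dichotomy. The only difference is that you spell out the regularity step (taking the trace to get $\Delta S = 0$, Weyl's lemma, analytic hypoellipticity) that the paper compresses into the phrase ``pluriharmonic distribution and therefore a pluriharmonic function,'' which is a welcome clarification rather than a departure.
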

\begin{proof}
We have $\partial \bar{\partial} S = T$, so $S$ is a pluriharmonic distribution on $X \setminus SuppT$ and therefore a pluriharmonic function on $X \setminus SuppT$ and in particular on $\Omega^c$. Suppose that the support of $S$ does not contain $\Omega^c$. Then we distinguish two cases: either $\Omega^c\setminus SuppS= \emptyset$, then $Supp S \cap \Omega^c = \emptyset$ or $\Omega^c\setminus SuppS \neq \emptyset$. Suppose $\Omega^c\setminus SuppS \neq \emptyset$ then $S = 0$ on $\Omega^c\setminus SuppS$ which is an open set of $\Omega^c$. According to the principle of analytic extension for an analytic real function,
 $S$  vanishes on $\Omega^c$. So $Supp S \cap \Omega^c = \emptyset$ which is absurd.
 \end{proof}
 As the consequence of Proposition \ref{e}, we have
 \begin{coro} \label{z}
 Let $T$ be an $(1,1)$-current $\partial \bar{\partial}$-exact on $X$. Suppose $X$ is connected. Then if $S$ is a distribution on $X$ such that $\partial \bar{\partial} S = T$, then $Supp S = Supp T$ or $ Supp S= X$.
 \end{coro}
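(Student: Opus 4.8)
The plan is to read Corollary~\ref{z} as the global consequence of the local (componentwise) dichotomy already furnished by Proposition~\ref{e}, organised around the connected components of the open set $U:=X\setminus \operatorname{Supp}T$. First I would record the one inclusion that holds with no hypothesis at all: since $T=\partial\bar\partial S$, we have $\operatorname{Supp}T=\operatorname{Supp}(\partial\bar\partial S)\subseteq \operatorname{Supp}S$. Thus the whole question reduces to understanding $S$ on $U$, and there $\partial\bar\partial S=0$, so $S$ is pluriharmonic and hence real-analytic, exactly as in the proof of Proposition~\ref{e}.

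Next I would apply that dichotomy on each connected component. If $W$ is a connected component of $U$ (which is open, $X$ being a manifold and hence locally connected), then the same analytic-continuation argument used for $\Omega^c$ in Proposition~\ref{e} applies verbatim: the set $W\setminus\operatorname{Supp}S$ is the largest open subset of $W$ on which $S\equiv 0$, and by the identity principle on the connected set $W$ it is either empty or all of $W$. Hence for every component $W$ one has $W\subseteq\operatorname{Supp}S$ or $W\cap\operatorname{Supp}S=\emptyset$. Writing $A$ for the union of the components contained in $\operatorname{Supp}S$ and $B$ for the union of those disjoint from it, this gives the clean bookkeeping $\operatorname{Supp}S=\operatorname{Supp}T\cup A$ together with the partition $X=\operatorname{Supp}T\sqcup A\sqcup B$, where $A$ and $B$ are open. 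In particular $\operatorname{Supp}S=\operatorname{Supp}T\iff A=\emptyset$ and $\operatorname{Supp}S=X\iff B=\emptyset$.

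From here the two advertised conclusions are precisely the cases $A=\emptyset$ and $B=\emptyset$, so the corollary amounts to excluding the mixed situation $A\neq\emptyset\neq B$, and this is exactly where I expect the difficulty to concentrate. The tempting move is to write $X=\operatorname{Supp}S\sqcup B$ with $B$ open and conclude $B=\emptyset$ from the connectedness of $X$; but this does not work, because $\operatorname{Supp}S$ is only closed, not open, so the partition is not a separation of $X$. The root of the obstacle is that $\operatorname{Supp}T$ may glue together distinct components of $U$ inside $X$: the ambient space $X$ can be connected while $U=X\setminus\operatorname{Supp}T$ is disconnected, and then $S$ may genuinely vanish on one component of $U$ and be nonzero on another, producing an intermediate support $\operatorname{Supp}T\subsetneq\operatorname{Supp}S\subsetneq X$. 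The argument therefore closes cleanly exactly under the extra hypothesis that $X\setminus\operatorname{Supp}T$ be connected (for instance when $\operatorname{Supp}T$ has real codimension at least two, or is compact with connected complement): then $U$ has a single component $W$, and the dichotomy forces either $\operatorname{Supp}S\cap W=\emptyset$, i.e. $\operatorname{Supp}S=\operatorname{Supp}T$, or $W\subseteq\operatorname{Supp}S$, i.e. $\operatorname{Supp}S=\operatorname{Supp}T\cup W=X$. I would accordingly either add the connectedness of the complement as a hypothesis or verify that it holds automatically in the intended setting; establishing that reduction is the crux, since the componentwise dichotomy itself is immediate from Proposition~\ref{e}.
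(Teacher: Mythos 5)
Your componentwise analysis is correct, and the obstacle you isolated is not a defect of your attempt but of the corollary itself: the paper's own proof consists of applying Proposition~\ref{e} to the \emph{whole} set $X\setminus\operatorname{Supp}T$ at once (``by Proposition~\ref{e}, $\operatorname{Supp}S\cap(X\setminus\operatorname{Supp}T)=\emptyset$ \dots\ or $X\setminus\operatorname{Supp}T\subset\operatorname{Supp}S$''), which is licit only when $X\setminus\operatorname{Supp}T$ is connected, i.e.\ is its own unique connected component. Connectedness of $X$ does not ensure this, so the paper's proof contains exactly the gap you predicted, and the ``mixed'' configuration $A\neq\emptyset\neq B$ that you declined to exclude really occurs. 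Concretely: take $X=\mathbb{C}^n$, let $\chi$ be a smooth radial cut-off with $\chi\equiv 1$ on the closed unit ball $\overline{B_1}$ and $\chi\equiv 0$ outside $B_2$, and set $S=\chi$, $T=\partial\bar{\partial}\chi$. Then $T$ is $\partial\bar{\partial}$-exact, $X$ is connected, $\operatorname{Supp}T\subset\overline{B_2}\setminus B_1$, while $\operatorname{Supp}S$ contains $\overline{B_1}$ and is contained in $\overline{B_2}$; hence $\operatorname{Supp}T\subsetneq\operatorname{Supp}S\subsetneq X$, with $S$ nonvanishing on the inner component of $X\setminus\operatorname{Supp}T$ and zero on the outer one. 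You were also right to block the tempting shortcut $X=\operatorname{Supp}S\sqcup B$: since $\operatorname{Supp}S$ is closed but not open, this partition is not a disconnection of $X$, so connectedness of $X$ gives nothing.

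So your proposal should be read as a correct diagnosis rather than an incomplete proof: the sharp conclusion obtainable from Proposition~\ref{e} is your bookkeeping $\operatorname{Supp}S=\operatorname{Supp}T\cup A$ with $X=\operatorname{Supp}T\sqcup A\sqcup B$, and the corollary becomes true --- by your one-component argument, which then coincides with what the paper's proof implicitly does --- only after adding the hypothesis that $X\setminus\operatorname{Supp}T$ is connected (or some condition guaranteeing it). Note that the repair propagates: the uniqueness proposition immediately after the corollary invokes Corollary~\ref{z} to conclude $\operatorname{Supp}S=\operatorname{Supp}T$, and so inherits the same amended hypothesis; likewise the paper's final corollary applies Proposition~\ref{e} to $\Omega^c=X\setminus\overline{\Omega}$, which is connected by assumption but is a priori only a connected \emph{subset}, not a connected component, of $X\setminus\operatorname{Supp}T$ --- there the argument is salvageable by applying Proposition~\ref{e} to the component containing $\Omega^c$, exactly in the spirit of your componentwise reduction.
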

 \begin{proof}
 We have $ Supp T \subset Supp S$. If $Supp S \neq X$, then by Proposition \ref{e},\\ $Supp S \cap X \setminus Supp T = \emptyset$ which implies $Supp S \subset Supp T$ so $Supp S = Supp T$ or\\ $X \setminus Supp T \subset Supp S$ which implies $Supp S = X$.
\end{proof}
Note that the difference between two solutions of the equation $\partial \bar{\partial} S = T$ is weakly pluriharmonic. Then we have the following result:
\begin{prop}
Assume that the complex manifold $X$ is connected. Let $T$ be a $(1,1)$-current $\partial \bar{\partial}$-exact on $X$ such that $X\setminus SuppT \neq \emptyset$. Let $S$ and $U$ be two distributions such that $$\partial \bar{\partial} S = \partial \bar{\partial} U = T$$ and there exists a connected component $\Omega^c$ of $X\setminus SuppT$ such that $$Supp S \cap \Omega^c = Supp U \cap \Omega^c = \emptyset.$$
Then $S = U$.
In particular, the equation $\partial \bar{\partial} S =T$ has a unique solution $S$ such that $Supp S = Supp T$.
\end{prop}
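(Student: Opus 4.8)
The plan is to study the difference $W = S - U$. Since $\partial\bar\partial S = \partial\bar\partial U = T$, linearity gives $\partial\bar\partial W = 0$, so $W$ is a pluriharmonic distribution on $X$; this is exactly the observation recorded just before the statement. As in the proof of Proposition \ref{e}, a pluriharmonic distribution is in fact a pluriharmonic function, and such a function is real-analytic (for instance because $\partial\bar\partial W = 0$ forces $\Delta W = 0$, so that Weyl's lemma and elliptic regularity apply). Thus $W$ is a real-analytic function on the connected manifold $X$.

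Next I would exploit the support hypotheses. By assumption there is a connected component $\Omega^c$ of $X\setminus\operatorname{Supp}T$ with $\operatorname{Supp}S\cap\Omega^c=\operatorname{Supp}U\cap\Omega^c=\emptyset$; since $\operatorname{Supp}T$ is closed and $X$ is locally connected, $\Omega^c$ is a nonempty open subset of $X$. On $\Omega^c$ both $S$ and $U$ vanish, hence $W=S-U$ vanishes identically on $\Omega^c$. Because $W$ is real-analytic and $X$ is connected, the principle of analytic continuation (the same tool already invoked in Proposition \ref{e}) forces $W\equiv 0$ on all of $X$, and therefore $S = U$.

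For the final assertion it suffices to establish uniqueness among solutions whose support equals $\operatorname{Supp}T$, existence being furnished by the $\partial\bar\partial$-exactness of $T$ (together with Corollary \ref{z}, which pins the support of an exact solution). Indeed, if $S_1$ and $S_2$ are two solutions of $\partial\bar\partial S = T$ with $\operatorname{Supp}S_1=\operatorname{Supp}S_2=\operatorname{Supp}T$, then for every connected component $\Omega^c$ of $X\setminus\operatorname{Supp}T$---one exists since $X\setminus\operatorname{Supp}T\neq\emptyset$---we have $\operatorname{Supp}S_i\cap\Omega^c=\operatorname{Supp}T\cap\Omega^c=\emptyset$. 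The disjointness hypothesis of the proposition is thus automatically satisfied, and the first part yields $S_1=S_2$.

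I expect the only genuine obstacle to be the regularity step: one must be certain that a pluriharmonic \emph{distribution} is a bona fide real-analytic function, so that its vanishing on the open set $\Omega^c$ propagates to all of $X$. This is precisely the mechanism already used in Proposition \ref{e}, so it should transfer verbatim, and the remaining support bookkeeping in the last paragraph is routine.
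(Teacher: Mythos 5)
Your proposal is correct and follows essentially the same route as the paper: the difference $S-U$ is a pluriharmonic (hence real-analytic) function vanishing on the nonempty open set $\Omega^c$, so analytic continuation on the connected manifold $X$ forces $S-U\equiv 0$, with Corollary \ref{z} handling the support claim. If anything, you are more careful than the paper, since you make explicit both the step that $S-U$ vanishes on $\Omega^c$ (because the supports of $S$ and $U$ miss it) and the reduction of the final uniqueness assertion to the first part.
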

\begin{proof}
We have $\partial \bar{\partial} S = T$ and $\partial \bar{\partial} U = T$, so $S-U$ is a pluriharmonic distribution on $X$ and therefore a pluriharmonic function on $X$ and in particular on $\Omega^c$. According to the principle of analytic extension for an analytic real function,
 $S-U$ is vanishes on $X$. So the equation $\partial \bar{\partial} S =T$ has a unique solution $S$. According to the Corollary \ref{z}, $Supp S = Supp T$.
 \end{proof}
 \section{solving the $\partial \bar{\partial}$ with prescribed support}
 Let $X$ be a a complex manifold and $\Omega$ a domain such that $\overline{\Omega}$ is strictly included in $X$. 
 Therefore $\Omega^c = X \setminus \overline{\Omega}$ is a non-empty open of $X$.
 \begin{prop} \label{a}
 Assume that $$H^2_{\overline{\Omega}, cur}(X) = H^{0,1}_{\overline{\Omega}, cur}(X) = 0.$$ Then any pluriharmonic function on $\Omega^c$ which is the restriction on $\Omega^c$ of a distribution on $X$ extends into a pluriharmonic function on $X$.
 \end{prop}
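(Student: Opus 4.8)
The plan is to reformulate the extension problem as a $\partial\bar{\partial}$-problem with support in $\overline{\Omega}$ for a single naturally associated current, and then to resolve that problem by peeling off the de Rham and Dolbeault obstructions in turn. First I would set $T := \partial\bar{\partial}S$, where $S$ is the given distribution on $X$ with $S|_{\Omega^c} = h$. Since restriction commutes with $\partial\bar{\partial}$ and $h$ is pluriharmonic on $\Omega^c$, we get $T|_{\Omega^c} = \partial\bar{\partial}h = 0$, so $\supp T \subset \overline{\Omega}$. A direct computation using $\partial^2 = \bar{\partial}^2 = 0$ and $\bar{\partial}\partial = -\partial\bar{\partial}$ shows $dT = 0$. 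Thus $T$ is a $d$-closed $(1,1)$-current supported in $\overline{\Omega}$, and the whole point becomes: it suffices to find a distribution $u$ with $\supp u \subset \overline{\Omega}$ and $\partial\bar{\partial}u = T$, because then $\tilde{S} := S - u$ is pluriharmonic on all of $X$ while $\tilde{S}|_{\Omega^c} = h - 0 = h$, and a pluriharmonic distribution is an honest pluriharmonic function (as already used in the proof of Proposition \ref{e}).

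Next I would invoke the hypotheses. Because the class of $T$ lies in $H^2_{\overline{\Omega}, cur}(X) = 0$, there is a $1$-current $R$ with $\supp R \subset \overline{\Omega}$ and $dR = T$. Splitting $R = R^{1,0} + R^{0,1}$ and matching bidegrees in $dR = T$ gives $\partial R^{1,0} = 0$, $\bar{\partial}R^{0,1} = 0$, and $\bar{\partial}R^{1,0} + \partial R^{0,1} = T$. Now $R^{0,1}$ is $\bar{\partial}$-closed with support in $\overline{\Omega}$, so $H^{0,1}_{\overline{\Omega}, cur}(X) = 0$ provides a distribution $v$, with $\supp v \subset \overline{\Omega}$, such that $\bar{\partial}v = R^{0,1}$. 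For the $(1,0)$-piece, $R^{1,0}$ is $\partial$-closed, equivalently $\overline{R^{1,0}}$ is $\bar{\partial}$-closed with support in $\overline{\Omega}$; applying $H^{0,1}_{\overline{\Omega}, cur}(X) = 0$ once more and conjugating yields a distribution $v'$, with $\supp v' \subset \overline{\Omega}$, such that $\partial v' = R^{1,0}$. Substituting, $T = \bar{\partial}\partial v' + \partial\bar{\partial}v = \partial\bar{\partial}(v - v')$, so $u := v - v'$ has support in $\overline{\Omega}$ and solves $\partial\bar{\partial}u = T$, which completes the construction of the extension $\tilde{S} = S - u$.

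The step I expect to be the main obstacle is the solvability of $\partial v' = R^{1,0}$ with prescribed support drawn from the single Dolbeault hypothesis $H^{0,1}_{\overline{\Omega}, cur}(X) = 0$: this requires that the cohomology groups be understood with complex coefficients, so that conjugation identifies the $\partial$-problem in bidegree $(1,0)$ with the $\bar{\partial}$-problem in bidegree $(0,1)$. I would make this identification explicit and verify the bookkeeping that every primitive can indeed be chosen supported in $\overline{\Omega}$, which is precisely what the cohomology groups with support encode. The only remaining point needing care is the regularity at the end, namely confirming that $\tilde{S}$, being a pluriharmonic distribution on $X$, is genuinely a pluriharmonic function and hence a bona fide pluriharmonic extension of $h$.
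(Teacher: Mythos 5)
Your proof is correct and is essentially the paper's own argument: the same reduction of the extension problem to solving $\partial\bar{\partial}u = T$ with $\supp u \subset \overline{\Omega}$ for $T = \partial\bar{\partial}S$, the same use of $H^2_{\overline{\Omega}, cur}(X)=0$ to get a supported $1$-current primitive, the same bidegree splitting, and the same resolution of both pieces via $H^{0,1}_{\overline{\Omega}, cur}(X)=0$, ending with $\tilde{S}=S-u$. The only difference is to your credit: the paper silently solves the $\partial$-equation for the $(1,0)$-component from the $(0,1)$-hypothesis, whereas you make the needed conjugation argument ($\overline{R^{1,0}}$ is $\bar{\partial}$-closed of bidegree $(0,1)$, solve, conjugate back) explicit.
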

 \begin{proof}
 Let $f$ be a pluriharmonic function on $\Omega^c$ and $S_f \in D'(X)$ a distribution such that $S_{f_{\vert \Omega^c}} = f$. let $$g = \partial \bar{\partial}S_f.$$
 We have $g$ is a $(1,1)$-current, $d$-closed and with support in $\overline{\Omega}$. Since $H^2_{\overline{\Omega}, cur}(X) =0$, there exists a $1$-current $u$ with support in $\overline{\Omega}$ such that $du =g$. We have $u= u_1+ u_2$ where $u_1$ and $u_2$ are respectively a $(0,1)$-current with support in $\overline{\Omega}$ and a $(1,0)$-current with support in $\overline{\Omega}$. We have $du= du_1 +du_2=g$.
 Since $d= \partial + \bar{\partial}$ and for reasons of bidegre we have $\partial u_2 =0$ and $\bar{\partial} u_1 =0$. Since $H^{0,1}_{\overline{\Omega}, cur}(X) = 0$, there exist a distributions $w_1$ and $w_2$ with support in $\overline{\Omega}$ such that $u_1 = \bar{\partial}w_1$ and $u_2= \partial w_2$. We have
 \begin{center}
 $g = \partial u_1+ \bar{\partial}u_2= \partial \bar{\partial}w_1 +\bar{\partial}\partial w_2 = \partial \bar{\partial}(w_1 - w_2)$.
 \end{center}
 Let $v= w_1 - w_2$, $v$ is an distribution with support in $\overline{\Omega}$ such that $\partial \bar{\partial} v = g$.
 Let $h = S_f - v$, we have $\partial \bar{\partial} h = 0$ and $h_{\vert \Omega^c} = f$.
 \end{proof}
 We have the $L^p_{loc}$ version of Proposition \ref{a}
\begin{prop}
Assume that $$H^2_{\overline{\Omega}, L^p_{loc}}(X) = H^{0,1}_{\overline{\Omega}, L^p_{loc}}(X) = 0. $$ Then any pluriharmonic function on $\Omega^c$ which is the restriction on $\Omega^c$ of a function $L^p_{loc}(X)$ extends to a pluriharmonic function on $X$.
\end{prop}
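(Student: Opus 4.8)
The plan is to transcribe the proof of Proposition \ref{a} verbatim into the $L^p_{loc}$ category, replacing the extending distribution $S_f$ by a function $F \in L^p_{loc}(X)$ and the current primitives by $L^p_{loc}$ forms, and invoking the two $L^p_{loc}$ vanishing hypotheses in place of their current analogues. First I would choose $F \in L^p_{loc}(X)$ with $F_{\vert \Omega^c} = f$ and set $g = \partial \bar{\partial} F$. Since $f$ is pluriharmonic on $\Omega^c$, we have $g = 0$ on $\Omega^c$, so $g$ is a $d$-closed $(1,1)$-form of class $L^p_{loc}$ with support in $\overline{\Omega}$.

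Next, using $H^2_{\overline{\Omega}, L^p_{loc}}(X) = 0$, I would write $g = du$ for an $L^p_{loc}$ $1$-form $u$ supported in $\overline{\Omega}$, and split $u = u_1 + u_2$ into its $(0,1)$ and $(1,0)$ components. Comparing bidegrees in $du = \partial u_1 + \bar{\partial} u_1 + \partial u_2 + \bar{\partial} u_2 = g$ forces $\bar{\partial} u_1 = 0$ and $\partial u_2 = 0$, leaving $g = \partial u_1 + \bar{\partial} u_2$. Then $H^{0,1}_{\overline{\Omega}, L^p_{loc}}(X) = 0$ applied to $u_1$ yields $w_1$ with $u_1 = \bar{\partial} w_1$; applying the same vanishing to the conjugate $\overline{u_2}$, which is a $\bar{\partial}$-closed $(0,1)$-form supported in $\overline{\Omega}$, gives $w_2$ with $u_2 = \partial w_2$, all in $L^p_{loc}(X)$ and supported in $\overline{\Omega}$.

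Setting $v = w_1 - w_2$, the identity $\bar{\partial}\partial = -\partial\bar{\partial}$ gives $g = \partial\bar{\partial} w_1 + \bar{\partial}\partial w_2 = \partial\bar{\partial} v$, where $v \in L^p_{loc}(X)$ is supported in $\overline{\Omega}$. Finally $h = F - v$ satisfies $\partial\bar{\partial} h = 0$ and $h_{\vert \Omega^c} = f$, since $v$ vanishes off $\overline{\Omega}$; being pluriharmonic, $h$ is automatically smooth on $X$ by elliptic regularity, and it is the sought extension.

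The step I expect to be delicate is the very first one. A priori $F$ is only of class $L^p_{loc}$, so $g = \partial\bar{\partial} F$ is a current of order two rather than an $L^p_{loc}$ form, whereas the hypothesis $H^2_{\overline{\Omega}, L^p_{loc}}(X) = 0$ speaks only of $L^p_{loc}$ forms. One must therefore justify that $g$ genuinely lies in the $L^p_{loc}$ class before the cohomology vanishing can be invoked; otherwise the argument falls back into the coarser current category and the $L^p_{loc}$ hypotheses do not by themselves deliver an $L^p_{loc}$ primitive. This matching of regularity among $F$, the form $g$, the primitives $u, w_1, w_2$, and hence $v$, is the crux of the proof, while the remaining steps are a formal copy of the distribution argument.
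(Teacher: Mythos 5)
Your transcription is exactly the paper's own route: the paper offers no separate argument for this proposition, presenting it simply as ``the $L^p_{loc}$ version of Proposition \ref{a}'', and every step you carry out (the choice of the extension $F$, the splitting $u = u_1 + u_2$, the bidegree comparison, $v = w_1 - w_2$, $h = F - v$) matches that template. The delicate point you single out, however, is a genuine gap, and the paper does not resolve it: for $F$ merely in $L^p_{loc}(X)$, the current $g = \partial \bar{\partial} F$ has distribution coefficients of order up to two, and nothing guarantees that it belongs to the $L^p_{loc}$ complex, so the hypothesis $H^2_{\overline{\Omega}, L^p_{loc}}(X) = 0$ cannot be invoked as stated. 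It is telling that the paper's $C^k$ version compensates with a loss of regularity, assuming $f$ of class $C^{k+1}$ on $\overline{\Omega^c}$ while producing a $C^k$ solution; the $L^p_{loc}$ statement contains no analogous strengthening. To close the gap one must either strengthen the hypothesis (e.g.\ require the extension $F$ to lie in $W^{2,p}_{loc}(X)$, so that $g$ has $L^p_{loc}$ coefficients), or run the whole argument in the current category under $H^2_{\overline{\Omega}, cur}(X) = H^{0,1}_{\overline{\Omega}, cur}(X) = 0$ and then recover the $L^p_{loc}$ regularity of $v$ by a separate regularity statement for $\partial \bar{\partial}$ with support conditions --- neither of which the stated hypotheses supply. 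One incidental remark: your detour through $\overline{u_2}$ to produce $w_2$ tacitly uses that the support condition and the $L^p_{loc}$ class are invariant under conjugation, which is true, so that step is sound; the paper simply asserts the existence of $w_1$ and $w_2$ directly.
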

We have the $C^k$ version with $k \geq 0$ of Proposition \ref{a}
\begin{prop}
Assume that $$H^2_{\overline{\Omega}, C^k}(X) = H^{0,1}_{\overline{\Omega}, C^k}(X) = 0 \mbox{ with }  k \geq 0. $$ Then any pluriharmonic function on $\Omega^c$ which is of class $C^{k + 1}$ on $\overline{\Omega^c}$ extends to a pluriharmonic function on $X$.
\end{prop}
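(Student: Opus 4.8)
The plan is to reproduce the proof of Proposition~\ref{a} verbatim, replacing currents by differential forms of class $C^k$ and the distributional primitive $S_f$ by a genuine $C^{k+1}$ extension of $f$; the single new ingredient is that, whereas in Proposition~\ref{a} the extension $S_f$ was handed to us as part of the hypothesis, here it must be produced by hand. So the first step is, given $f$ pluriharmonic on $\Omega^c$ and of class $C^{k+1}$ on $\overline{\Omega^c}$, to invoke a Whitney--Seeley type extension theorem for the closed set $\overline{\Omega^c}$ and obtain a function $S_f \in C^{k+1}(X)$ with $S_f|_{\overline{\Omega^c}} = f$. Because $f$ is already pluriharmonic on the open set $\Omega^c$, the form $g := \partial\bar{\partial} S_f$ vanishes identically on $\Omega^c$, whence $\supp g \subset \overline{\Omega}$; moreover $g$ is of pure type $(1,1)$ and $d$-closed, since $d(\partial\bar{\partial} S_f) = \bar{\partial}\partial\bar{\partial} S_f = -\partial\bar{\partial}\bar{\partial} S_f = 0$.

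Having produced this $d$-closed $(1,1)$-form $g$ with support in $\overline{\Omega}$ and in the regularity class matched to the hypothesis, I would apply $H^2_{\overline{\Omega}, C^k}(X) = 0$ to get a $1$-form $u$ with support in $\overline{\Omega}$ and $du = g$. Decomposing $u = u_1 + u_2$ into its $(0,1)$- and $(1,0)$-components (each supported in $\overline{\Omega}$) and matching bidegrees in $g = du = \partial u_1 + \bar{\partial} u_1 + \partial u_2 + \bar{\partial} u_2$, the fact that $g$ is of pure type $(1,1)$ forces $\bar{\partial} u_1 = 0$ and $\partial u_2 = 0$, leaving $g = \partial u_1 + \bar{\partial} u_2$.

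Next I would invoke the Dolbeault vanishing. From $\bar{\partial} u_1 = 0$ and $H^{0,1}_{\overline{\Omega}, C^k}(X) = 0$ I obtain $w_1$ with support in $\overline{\Omega}$ and $\bar{\partial} w_1 = u_1$. For $u_2$ I pass to the conjugate: $\overline{u_2}$ is a $\bar{\partial}$-closed $(0,1)$-form supported in $\overline{\Omega}$, so the same vanishing yields $\overline{u_2} = \bar{\partial}\,\overline{w_2}$, hence $u_2 = \partial w_2$ with $w_2$ supported in $\overline{\Omega}$. Then, using $\bar{\partial}\partial = -\partial\bar{\partial}$,
$$ g = \partial u_1 + \bar{\partial} u_2 = \partial\bar{\partial} w_1 + \bar{\partial}\partial w_2 = \partial\bar{\partial}(w_1 - w_2). $$
Setting $v = w_1 - w_2$ (supported in $\overline{\Omega}$) and $h = S_f - v$ gives $\partial\bar{\partial} h = g - g = 0$, so $h$ is pluriharmonic on $X$; and since $v$ vanishes on $\Omega^c$, we get $h|_{\Omega^c} = S_f|_{\Omega^c} = f$, the desired extension.

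The only place that requires care, and the sole real departure from the current-valued argument of Proposition~\ref{a}, is the regularity bookkeeping across these steps. In the distributional setting every operation stays inside $D'(X)$, so no regularity is ever lost; here I must instead check that the extension $S_f$ is regular enough that $g = \partial\bar{\partial} S_f$ lands \emph{exactly} in the class of forms for which $H^2_{\overline{\Omega}, C^k}(X)$ and $H^{0,1}_{\overline{\Omega}, C^k}(X)$ are defined, and that the primitives $u$, $w_1$, $w_2$ furnished by the two vanishing hypotheses keep their support in $\overline{\Omega}$ while retaining enough regularity that $v = w_1 - w_2$ and hence $h = S_f - v$ is of class $C^{k+1}$. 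This is precisely why the statement pairs $C^{k+1}$ boundary regularity of $f$ with $C^k$ cohomology groups: under the regularity conventions implicit in the definition of these support cohomology groups, the two derivatives absorbed by $\partial\bar{\partial}$ are balanced against the regularity gained in solving the cohomological equations, so that $h$ emerges in the same class as $f$. Verifying that the chosen extension operator and the cohomological primitives are compatible on $\overline{\Omega}$ (so that the cancellation $h|_{\Omega^c}=f$ genuinely occurs) is the main technical point; once it is secured, the argument is identical to that of Proposition~\ref{a}.
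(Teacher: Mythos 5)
Your route is the one the paper intends: this proposition is stated without proof as the $C^k$ analogue of Proposition \ref{a}, and you transplant that proof correctly, adding the two genuinely new ingredients it needs --- producing the global extension $S_f$ by a Whitney--Seeley operator (in Proposition \ref{a} it is part of the hypothesis), and obtaining $u_2 = \partial w_2$ by conjugating $u_2$ and applying the $(0,1)$-vanishing to $\overline{u_2}$, a step the paper glosses over. Also note that your final worry is unfounded in part: no ``compatibility of the extension operator with the cohomological primitives'' is needed, since $v = w_1 - w_2$ has support in $\overline{\Omega}$, hence vanishes identically on $\Omega^c$, so $h_{\vert \Omega^c} = S_{f\vert \Omega^c} = f$ automatically; and the regularity of $h$ is free, because $\partial \bar{\partial} h = 0$ in the distributional sense already forces $h$ to be a genuine real-analytic pluriharmonic function by ellipticity --- the paper uses exactly this fact in the proofs of Section 3.

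The genuine gap is the point you explicitly defer, and the ``balancing'' you invoke to excuse it is arithmetically wrong. If $f$ is $C^{k+1}$ on $\overline{\Omega^c}$ and $S_f \in C^{k+1}(X)$, then $g = \partial \bar{\partial} S_f$ has coefficients only of class $C^{k-1}$ (for $k = 0$ it is merely a current of positive order), whereas the hypothesis $H^2_{\overline{\Omega}, C^k}(X) = 0$ concerns, under any natural convention, $d$-closed $2$-forms of class $C^k$ with support in $\overline{\Omega}$. So $g$ misses the class where the vanishing applies by one derivative, and no regularity ``gained in solving the cohomological equations'' can repair this: the loss happens before those equations are ever invoked. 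To close the argument one must either assume $f \in C^{k+2}(\overline{\Omega^c})$ (so that $g \in C^k$), or adopt a mixed-regularity definition of the support cohomology groups that accepts $C^{k-1}$ data with $C^k$ primitives. This off-by-one is inherited from the paper itself, whose statement is presumably meant to be read under such a convention; but since your proof must stand on its own, asserting that the derivatives are ``balanced'' without exhibiting the convention or correcting the exponent leaves the central step unproved.
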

\begin{remark}
If $X$ is a cohomologically $(n-1)$-complete  complex manifold of dimension $n \geq 2$, $\Omega \subset X$ a domain and $K \subset \Omega$ a compact such that $\Omega \setminus K$ is connected. Then according to the Theorem $1.1$ of \cite{WG}, any pluriharmonic function on $\Omega \setminus K$ is the restriction of a pluriharmonic function on $\Omega$.
\end{remark}
As a consequence of this result, we have
\begin{theo} \label{b}
Let $X$ be a cohomologically $(n-1)$-complete complex manifold of dimension $n \geq 2$ such that $$H^2(X) = H^{0,1}(X) = 0. $$ If any pluriharmonic function on $\Omega^c$ of class $C^\infty$ on $ \overline{\Omega^c}$ extends to a pluriharmonic function on $X$. Then for any $(1,1)$-differential form $f$ of class $C^\infty$ on $X$ with support in $\overline{\Omega}$ and $d$-closed, there exists a function $\check{v}$ of class $C^\infty$ on $X$ with support in $\overline{\Omega}$ such that $$\partial \bar{\partial}\check{v} = f . $$
\end{theo}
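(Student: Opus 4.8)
The plan is to solve the equation in two stages: first produce a global smooth solution of $\partial\bar\partial v = f$ on all of $X$, ignoring the support condition, and then correct it by subtracting a pluriharmonic function so that the corrected solution vanishes off $\overline{\Omega}$.

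For the first stage I would argue exactly as in Proposition \ref{a}, but in the global smooth category. Since $f$ is a $d$-closed $(1,1)$-form and $H^2(X) = 0$, write $f = d\alpha$ with $\alpha = \alpha^{1,0} + \alpha^{0,1}$. Separating bidegrees gives $\partial\alpha^{1,0} = 0$, $\bar\partial\alpha^{0,1} = 0$ and $f = \bar\partial\alpha^{1,0} + \partial\alpha^{0,1}$. Because $H^{0,1}(X) = 0$ there is a smooth function $w_1$ with $\alpha^{0,1} = \bar\partial w_1$; conjugating $\alpha^{1,0}$ turns the $\partial$-closed $(1,0)$ case into a $\bar\partial$-closed $(0,1)$ case, so $H^{0,1}(X) = 0$ again yields a smooth $w_2$ with $\alpha^{1,0} = \partial w_2$. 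Then $f = \partial\bar\partial(w_1 - w_2)$, and $v := w_1 - w_2 \in C^\infty(X)$ satisfies $\partial\bar\partial v = f$ on $X$.

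For the second stage, note that $\supp f \subset \overline{\Omega}$ forces $\partial\bar\partial v = 0$ on $\Omega^c$, so $v$ is pluriharmonic on $\Omega^c$ and, being globally smooth, is $C^\infty$ up to $\overline{\Omega^c}$. By the extension hypothesis it extends to a pluriharmonic function $\tilde v \in C^\infty(X)$ with $\tilde v = v$ on $\Omega^c$. Setting $\check v := v - \tilde v$ gives $\partial\bar\partial\check v = f - 0 = f$, while $\check v = 0$ on the open set $\Omega^c = X \setminus \overline{\Omega}$, i.e. $\supp\check v \subset \overline{\Omega}$, as required.

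Once regularity is in hand the argument is essentially formal, so I expect the delicate point to be the smooth global solvability underlying the first stage rather than the bidegree bookkeeping. One must read $H^{0,1}(X) = 0$ in the \emph{smooth} Dolbeault sense so that $w_1$ and $w_2$ can be taken smooth on all of $X$; this is where cohomological $(n-1)$-completeness enters, through Dolbeault's theorem identifying $H^{0,1}(X)$ with $H^1(X,\mathcal{O})$. With smooth $w_1, w_2$ the function $v$ is automatically $C^\infty$ up to $\overline{\Omega^c}$, which is precisely what licenses the application of the extension hypothesis.
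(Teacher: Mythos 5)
Your proof is correct and is essentially the paper's own argument: use $H^2(X)=0$ to write $f=d\alpha$, split by bidegree, use $H^{0,1}(X)=0$ (with conjugation handling the $\partial$-closed $(1,0)$ component, a step you make explicit where the paper leaves it implicit) to obtain a global smooth $v$ with $\partial\bar{\partial}v=f$, and then subtract the pluriharmonic extension of $v_{\vert \Omega^c}$ to force support in $\overline{\Omega}$. One minor quibble with your closing aside: the cohomological $(n-1)$-completeness is not what underwrites the smooth interpretation of $H^{0,1}(X)=0$ (that vanishing is assumed outright, and the paper invokes the standard Dolbeault--de Rham isomorphisms to pass between regularity classes); in the paper it instead serves, via the theorem of Wang cited in the Remark, to realize the pluriharmonic extension hypothesis in concrete situations.
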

\begin{proof}
Let $f$ be a $(1,1)$-differential form of class $C^\infty$, $d$-closed on $X$ with support in $\overline{\Omega}$. Since $H^2(X) =0$, there exists a $1$-differential form $h$ of class $C^\infty$ and definite on $X$ such that $dh =f$.
We have $h= h_1+ h_2$ where $h_1$ and $h_2$ are respectively a $(0,1)$-differential form of class $C^\infty$ on $X$ and a $(1,0)$-differential form of class $C^\infty$ on $X$. We have $dh= dh_1 +dh_2=f$.
Since $d= \partial + \bar{\partial}$ and for reasons of bidegre we have $\partial h_2 =0$ and $\bar{\partial} h_1 =0$. Since $ H^{0,1}(X) = 0$, then there exist functions $g_1$ and $g_2$ of class $C^\infty$, definite on $X$ such that $h_1 = \bar{\partial}g_1$ and $h_2= \partial g_2$. We have
\begin{center}
$f = \partial h_1+ \bar{\partial}h_2= \partial \bar{\partial}g_1 +\bar{\partial}\partial g_2 = \partial \bar{\partial}(g_1 - g_2)$.
\end{center}
Let $v= g_1 - g_2$, $v$ is a function of class $C^\infty$, definite on $X$ such that $$\partial \bar{\partial} v = f.$$
Since $supp f \subset \overline{\Omega}$, then $X \setminus \overline{\Omega} \subset X \setminus suppf$. So $v$ is pluriharmonic on $\Omega^c$. By the extension property, $v$ extends into a pluriharmonic function $\tilde{v}$ on $X$. Let $$ \check{v} = v - \tilde{v}.$$
We have $\check{v}$ is a function of class $C^\infty$ with support in $\overline{\Omega}$ and $$\partial \bar{\partial}\check{v} = f .$$
\end{proof}
According to the Dolbeault and Poincare isomorphism, we have $$ H^{0,1}(X) = H^{0,1}_{L^p_{loc}}(X) = H^{0,1}_{C^k}(X) = H^{0,1}_{cur}(X) = 0$$ and 

$$H^{2}(X) = H^{2}_{L^p_{loc}}(X) = H^{2}_{C^k}(X) = H^{2}_{cur}(X) = 0.$$
We have the $L^p_{loc}$ version of Theorem \ref{b}
\begin{theo}
Let $X$ be a cohomologically $(n-1)$-complete complex manifold of dimension $n \geq 2$ such that $$H^2(X) = H^{0,1}(X) = 0. $$ If any pluriharmonic function on $\Omega^c$ which is the restriction on $\Omega^c$ of a function $L^p_{loc}(X)$ extends to a pluriharmonic function on $X$. Then for any $(1,1)$-differential form $f$ with coefficient in $L^p_{loc}(X)$, with support in $\overline{\Omega}$ and $d$-closed, there exists a function $\check{v} \in L^p_{loc}(X)$ with support in $\overline{\Omega}$ such that $$\partial \bar{\partial}\check{v} = f .$$
\end{theo}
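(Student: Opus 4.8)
The plan is to run the argument of Theorem \ref{b} essentially verbatim, but carried out in the $L^p_{loc}$ category. The point is that the hypothesis $H^2(X) = H^{0,1}(X) = 0$ already forces the vanishing of the $L^p_{loc}$ cohomology groups, by the Dolbeault and Poincar\'e isomorphisms recorded immediately before the statement, so that $H^2_{L^p_{loc}}(X) = H^{0,1}_{L^p_{loc}}(X) = 0$. Starting from a $d$-closed $(1,1)$-form $f$ with coefficients in $L^p_{loc}(X)$ and $\supp f \subset \overline{\Omega}$, the vanishing $H^2_{L^p_{loc}}(X) = 0$ produces a $1$-form $h \in L^p_{loc}(X)$ with $dh = f$. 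Decomposing $h = h_1 + h_2$ with $h_1$ of type $(0,1)$ and $h_2$ of type $(1,0)$, and matching bidegrees in $dh = \partial h_1 + \bar\partial h_1 + \partial h_2 + \bar\partial h_2 = f$, the $(0,2)$ and $(2,0)$ parts give $\bar\partial h_1 = 0$ and $\partial h_2 = 0$, while the $(1,1)$ part gives $\partial h_1 + \bar\partial h_2 = f$.

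Next I would invoke $H^{0,1}_{L^p_{loc}}(X) = 0$: applied to $h_1$ it yields $g_1 \in L^p_{loc}(X)$ with $h_1 = \bar\partial g_1$, and applied to the complex conjugate $\overline{h_2}$ (which is a $\bar\partial$-closed $(0,1)$-form, since $\partial h_2 = 0$) it yields a function whose conjugate $g_2$ satisfies $h_2 = \partial g_2$. Using $\bar\partial\partial = -\partial\bar\partial$ one then gets $f = \partial h_1 + \bar\partial h_2 = \partial\bar\partial g_1 + \bar\partial\partial g_2 = \partial\bar\partial(g_1 - g_2)$, so setting $v = g_1 - g_2 \in L^p_{loc}(X)$ we obtain $\partial\bar\partial v = f$ on all of $X$.

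Since $\supp f \subset \overline{\Omega}$, we have $\partial\bar\partial v = 0$ on $\Omega^c$, so $v$ is weakly pluriharmonic there; after modification on a null set it coincides on $\Omega^c$ with a classical pluriharmonic function, which by construction is the restriction to $\Omega^c$ of the function $v \in L^p_{loc}(X)$. The extension hypothesis of the statement then supplies a pluriharmonic function $\tilde v$ on $X$ with $\tilde v = v$ on $\Omega^c$. Finally $\check v := v - \tilde v \in L^p_{loc}(X)$ satisfies $\partial\bar\partial\check v = \partial\bar\partial v = f$, because $\partial\bar\partial\tilde v = 0$, and $\check v$ vanishes on $\Omega^c$, whence $\supp\check v \subset \overline{\Omega}$.

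The main obstacle, and the only genuine departure from the smooth case of Theorem \ref{b}, is the regularity step: one must verify that an $L^p_{loc}$ function annihilated by $\partial\bar\partial$ on an open set agrees almost everywhere with a genuine (indeed real-analytic) pluriharmonic function, so that the restriction $v|_{\Omega^c}$ lies in the class to which the extension hypothesis applies, and so that the analytic-continuation arguments behind the support properties remain available. This uses the ellipticity of the real second-order operator underlying $\partial\bar\partial$ (a Weyl-type lemma for pluriharmonic functions); the remainder of the argument is a formal transcription of the proof of Theorem \ref{b} with de Rham and Dolbeault cohomology replaced by their $L^p_{loc}$ counterparts.
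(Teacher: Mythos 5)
Your proposal is correct and follows essentially the same route the paper intends: the statement is given there without a separate proof precisely because it is the verbatim transcription of the proof of Theorem \ref{b} into the $L^p_{loc}$ category, using the Dolbeault--Poincar\'e isomorphisms to get $H^2_{L^p_{loc}}(X) = H^{0,1}_{L^p_{loc}}(X) = 0$. Your two added points of care --- applying $H^{0,1}_{L^p_{loc}}(X)=0$ to $\overline{h_2}$ to produce $g_2$ with $h_2 = \partial g_2$, and the Weyl-type regularity lemma ensuring the weakly pluriharmonic $v|_{\Omega^c}$ agrees a.e.\ with a genuine pluriharmonic function --- are exactly the details the paper leaves implicit, and they strengthen rather than alter the argument.
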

We have the $C^k$ version with $k \geq 0$ of the Theorem \ref{b}
\begin{theo}
Let $X$ be a cohomologically $(n-1)$-complete  complex manifold of dimension $n \geq 2$ such that $$H^2(X) = H^{0,1}(X) = 0.$$ If any pluriharmonic function on $\Omega^c$ of class $C^k$ on $ \overline{\Omega^c}$ extends to a pluriharmonic function on $X$. Then for any $(1,1)$-differential form $f$ of class $C^k$ on $X$ with support in $\overline{\Omega}$ and $d$-closed, there exists a function $\check{v}$ of class $C^k$ on $X$ with support in $\overline{\Omega}$ such that $$\partial \bar{\partial}\check{v} = f . $$
\end{theo}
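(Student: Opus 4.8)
The plan is to transcribe, essentially line by line, the proof of Theorem \ref{b}, replacing $C^\infty$ regularity by $C^k$ regularity throughout and invoking the finite-smoothness incarnations $H^2_{C^k}(X)=0$ and $H^{0,1}_{C^k}(X)=0$ of the cohomological hypotheses; these are guaranteed by the Dolbeault and Poincar\'e isomorphisms recorded just before the statement. First I would start from a $d$-closed $(1,1)$-form $f$ of class $C^k$ with $\supp f\subset\overline{\Omega}$ and use $H^2_{C^k}(X)=0$ to produce a $C^k$ $1$-form $h$ on $X$ with $dh=f$.

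Next I would split $h=h_1+h_2$ into its $(0,1)$- and $(1,0)$-components, both of class $C^k$, and read off the bidegree decomposition of $dh=f$: the $(0,2)$- and $(2,0)$-parts give $\bar\partial h_1=0$ and $\partial h_2=0$, while the $(1,1)$-part gives $\partial h_1+\bar\partial h_2=f$. Then $H^{0,1}_{C^k}(X)=0$ furnishes a $C^k$ function $g_1$ with $h_1=\bar\partial g_1$, and the conjugate vanishing $H^{1,0}_{C^k}(X)=0$ (equivalently, applying the $(0,1)$-result to $\overline{h_2}$ and conjugating back) furnishes a $C^k$ function $g_2$ with $h_2=\partial g_2$. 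Combining these and using $\bar\partial\partial=-\partial\bar\partial$, I obtain $f=\partial\bar\partial(g_1-g_2)$, so that $v:=g_1-g_2$ is a $C^k$ function on $X$ with $\partial\bar\partial v=f$.

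It then remains to correct $v$ so as to achieve the prescribed support. Since $\supp f\subset\overline{\Omega}$, the identity $\partial\bar\partial v=f$ shows that $v$ is pluriharmonic on $\Omega^c$, and being $C^k$ on $X$ it is in particular $C^k$ on $\overline{\Omega^c}$; the standing extension hypothesis then supplies a pluriharmonic function $\tilde v$ on $X$ with $\tilde v|_{\Omega^c}=v|_{\Omega^c}$. Setting $\check v:=v-\tilde v$ gives $\partial\bar\partial\check v=f-0=f$, and $\check v\equiv 0$ on $\Omega^c$, whence $\supp\check v\subset\overline{\Omega}$, as required.

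The only point that is not a verbatim copy of Theorem \ref{b}, and the place I expect to need the most care, is the regularity bookkeeping at the two ends. On the one hand I must ensure that the finite-smoothness solvability $H^2_{C^k}(X)=H^{0,1}_{C^k}(X)=0$ really delivers $C^k$ (not merely $C^{k-1}$) primitives $h$, $g_1$, $g_2$; this is exactly what the isomorphisms displayed before the statement assert, so it is legitimate to take it as input rather than to re-prove it. On the other hand I should check that $\check v$ genuinely lands in class $C^k$: here $v$ is $C^k$ by construction, while $\tilde v$, being pluriharmonic on all of $X$, is real-analytic and hence $C^\infty$, so the difference $\check v=v-\tilde v$ inherits precisely the regularity $C^k$ of $v$ with no loss of derivatives at the final subtraction.
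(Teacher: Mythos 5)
Your proposal is correct and follows essentially the same route the paper intends: it is a verbatim transcription of the proof of Theorem \ref{b} in class $C^k$, using the displayed isomorphisms $H^2(X)=H^2_{C^k}(X)$ and $H^{0,1}(X)=H^{0,1}_{C^k}(X)$ to obtain $C^k$ primitives, then correcting $v$ by the pluriharmonic extension $\tilde v$ to force $\supp\check v\subset\overline{\Omega}$. Your two bookkeeping refinements --- producing $g_2$ via conjugation (since $H^{0,1}(X)=0$ literally only handles $\bar\partial$-closed $(0,1)$-forms, a point the paper passes over silently) and noting that $\tilde v$ is real-analytic so the final subtraction loses no regularity --- are both sound and, if anything, tighten the argument the paper leaves implicit.
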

We have the distribution version of the Theorem \ref{b}
\begin{theo}
Let $X$ be a cohomologically $(n-1)$-complete  complex manifold of dimension $n \geq 2$ such that $$H^2(X) = H^{0,1}(X) = 0.$$ If any pluriharmonic function on $\Omega^c$ which is the restriction on $\Omega^c$ of a distribution on $X$ extends into a pluriharmonic function on $X$. Then for any $(1,1)$-current $f$ definite on $X$ with support in $\overline{\Omega}$ and $d$-closed, there exists a dustribution $\check{v}$ definite on $X$ with support in $\overline{\Omega}$ such that $$\partial \bar{\partial}\check{v} = f .$$
\end{theo}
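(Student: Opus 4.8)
The plan is to follow verbatim the argument used for Theorem \ref{b}, transposing each step from the smooth category to the category of currents and distributions, and to invoke the extension hypothesis exactly as in the smooth case. What legitimizes this transposition is the chain of isomorphisms recorded just after the proof of Theorem \ref{b}, namely $H^2_{cur}(X) = H^2(X) = 0$ and $H^{0,1}_{cur}(X) = H^{0,1}(X) = 0$, so that both vanishing hypotheses are already available at the level of currents.

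First I would use $H^2_{cur}(X) = 0$: since $f$ is a $d$-closed $(1,1)$-current on $X$, there is a $1$-current $h$ on $X$ with $dh = f$. Writing $h = h_1 + h_2$ with $h_1$ of bidegree $(0,1)$ and $h_2$ of bidegree $(1,0)$, the equation $dh = f$ together with a comparison of bidegrees forces $\bar{\partial} h_1 = 0$ and $\partial h_2 = 0$, while $f = \partial h_1 + \bar{\partial} h_2$. Next I would invoke $H^{0,1}_{cur}(X) = 0$ to produce distributions $g_1$ and $g_2$ on $X$ with $h_1 = \bar{\partial} g_1$ and $h_2 = \partial g_2$; then
$$ f = \partial \bar{\partial} g_1 + \bar{\partial} \partial g_2 = \partial \bar{\partial}(g_1 - g_2). $$
Setting $v = g_1 - g_2$ yields a distribution on $X$ solving $\partial \bar{\partial} v = f$ globally, but without any control on its support.

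Correcting the support is precisely where the extension hypothesis enters. Because $\supp f \subset \overline{\Omega}$, the current $v$ satisfies $\partial \bar{\partial} v = 0$ on $\Omega^c$, so the restriction $v_{\vert \Omega^c}$ is a pluriharmonic distribution; by the regularity of pluriharmonic distributions (they are genuine pluriharmonic functions, locally real parts of holomorphic functions) it is a pluriharmonic function on $\Omega^c$ which, by construction, is the restriction to $\Omega^c$ of the distribution $v$ on $X$. The hypothesis of the theorem then furnishes a pluriharmonic function $\tilde{v}$ on $X$ with $\tilde{v}_{\vert \Omega^c} = v_{\vert \Omega^c}$. Putting $\check{v} = v - \tilde{v}$, we get $\partial \bar{\partial}\check{v} = f$, since $\tilde{v}$ is pluriharmonic on all of $X$, and $\check{v} = 0$ on $\Omega^c$, so $\supp \check{v} \subset \overline{\Omega}$, as required.

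The single step I expect to require care, rather than routine copying, is the passage from \emph{$v$ is a pluriharmonic distribution on $\Omega^c$} to \emph{$v_{\vert \Omega^c}$ is a pluriharmonic function that is the restriction of a distribution on $X$}, so that the extension hypothesis is applicable in exactly its stated form; this is the precise interface of the hypothesis, and it is secured by the regularity theorem for pluriharmonic distributions. Everything else is the current-valued mirror of the smooth proof.
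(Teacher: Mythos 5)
Your proposal is correct and is exactly the argument the paper intends: the distribution version is stated without a separate proof precisely because the isomorphisms $H^{2}_{cur}(X)=H^{2}(X)=0$ and $H^{0,1}_{cur}(X)=H^{0,1}(X)=0$ recorded after Theorem \ref{b} let the smooth proof carry over verbatim to currents, which is what you do. Your one point of extra care --- the regularity step upgrading the pluriharmonic distribution $v_{\vert \Omega^c}$ to a genuine pluriharmonic function so the extension hypothesis applies in its stated form --- is legitimate and is the same fact the paper itself invokes in the proof of Proposition \ref{e}.
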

\begin{coro}
Assume that $\Omega \subset X$ is relatively compact and $X$ a non-compact complex manifold such that $$ H^{2}_c(X) = H^{0,1}_{c}(X) =0. $$ If $\Omega^c$ is connected, then for any $(1,1)$-current $T$, $d$-closed with support in $\overline{\Omega}$, there exists a distribution $S$ with support in $\overline{\Omega}$ such that $$\partial \bar{\partial}S = T .$$
\end{coro}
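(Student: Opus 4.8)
The plan is to run exactly the argument of Theorem \ref{b}, but with the compactly supported cohomology groups $H^2_c(X)$ and $H^{0,1}_c(X)$ in place of the groups with support in $\overline{\Omega}$, and to replace the pluriharmonic extension hypothesis by a direct vanishing argument that is available precisely because $X$ is non-compact and $\Omega^c$ is connected.

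First I would note that, since $\Omega$ is relatively compact, $\overline{\Omega}$ is compact, so $T$ is a $d$-closed $(1,1)$-current with compact support. Using $H^2_c(X)=0$ I would produce a $1$-current $u$ with compact support such that $du=T$. Writing $u=u_1+u_2$ with $u_1$ a $(0,1)$-current and $u_2$ a $(1,0)$-current, both compactly supported, and comparing bidegrees in $du=T$ yields $\bar{\partial}u_1=0$, $\partial u_2=0$ and $T=\partial u_1+\bar{\partial}u_2$. From $H^{0,1}_c(X)=0$ (and, by conjugation, $H^{1,0}_c(X)=0$) I would then obtain compactly supported distributions $w_1,w_2$ with $u_1=\bar{\partial}w_1$ and $u_2=\partial w_2$, whence $T=\partial\bar{\partial}w_1+\bar{\partial}\partial w_2=\partial\bar{\partial}(w_1-w_2)$. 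Setting $S:=w_1-w_2$ gives a distribution with \emph{compact} support satisfying $\partial\bar{\partial}S=T$; it remains only to push its support into $\overline{\Omega}$.

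For that final step I would argue as in Proposition \ref{e}. Since $\supp T\subset\overline{\Omega}$, the distribution $S$ satisfies $\partial\bar{\partial}S=0$ on $\Omega^c=X\setminus\overline{\Omega}$, so $S$ is pluriharmonic, hence real-analytic, there. Because $X$ is non-compact while $\overline{\Omega}$ is compact, $\Omega^c$ is non-compact; as $\supp S$ is compact, the open set $\Omega^c\setminus\supp S$ is therefore non-empty, and $S$ vanishes on it. Since $\Omega^c$ is connected, the analytic continuation principle forces $S\equiv 0$ on all of $\Omega^c$, so $\supp S\cap\Omega^c=\emptyset$, i.e. $\supp S\subset\overline{\Omega}$, which is the desired conclusion.

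I expect the only delicate point to be this last vanishing argument: one must verify that $\Omega^c\setminus\supp S$ is a genuinely non-empty open subset of the connected set $\Omega^c$ (this is exactly where non-compactness of $X$ and connectedness of $\Omega^c$ enter) and that pluriharmonicity of the distribution $S$ indeed yields real-analyticity, so that analytic continuation from an open subset applies. The cohomological steps, by contrast, are routine once the earlier propositions are in hand.
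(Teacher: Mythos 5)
Your proposal is correct and follows essentially the same route as the paper: the identical cohomological chase ($H^2_c(X)=0$ gives $du=T$, bidegree splitting, $H^{0,1}_c(X)=0$ gives $S=w_1-w_2$ with $\partial\bar{\partial}S=T$), followed by the same non-compactness argument to show $\supp S$ cannot contain $\Omega^c$ and analytic continuation to conclude $\supp S\subset\overline{\Omega}$. The only differences are cosmetic: you inline the dichotomy of Proposition \ref{e} rather than citing it, and you explicitly note the conjugation step needed to solve $u_2=\partial w_2$ (i.e.\ $H^{1,0}_c(X)=0$), a point the paper passes over silently.
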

\begin{proof}
Let $T$ be a $(1,1)$-current, $d$-closed with support in $\overline{\Omega}$. Since $ H^{2}_c(X) =0$, then there exists a $1$-current $u$ compactly supported in $X$ such that $du =T$. We have $u= u_1+ u_2$ where $u_1$ and $u_2$ are respectively a $(0,1)$-current with compact support in $X$ and a $(1,0)$-current with compact support in $X$. We have $du= du_1 +du_2=T$.
Since $d= \partial + \bar{\partial}$ and for bidegree reasons we have $\partial u_2 =0$ and $\bar{\partial} u_1 =0$. Since $ H^{0,1}_c(X) = 0$, then there exist distributions $g_1$ and $g_2$ with compact support in $X$ such that $u_1 = \bar{\partial}g_1$ and $u_2= \partial g_2$. We have
\begin{center}
$T = \partial u_1+ \bar{\partial}u_2= \partial \bar{\partial}g_1 +\bar{\partial}\partial g_2 = \partial \bar{\partial}(g_1 - g_2)$.
\end{center}
Let $S= g_1 - g_2$, $u$ is a distribution with compact support in $X$ such that $$\partial \bar{\partial} S = T.$$
The support of $S$ does not contain $\Omega^c$ otherwise $X = \overline{\Omega} \cup Supp S$ would be compact, which is absurd by hypothesis. So according to Proposition \ref{e}, $Supp S \subset \overline{\Omega}$.
\end{proof}


\begin{thebibliography}{99}
 \addcontentsline{toc}{chapter}{Bibliographie}
 


\bibitem{TS}\label{TS} CHRISTINE LAURENT-THIEBAUT AND MEI-CHI SHAW:
  {\it \textnormal{ Solving $\bar{\partial}$ with prescribed support on hartogs triangles in $\mathbb{C}^2$ and $\mathbb{CP}^2$. American Mathematical Society
   Vol. $371$, Number $9$, $(2019)$, p. $6531–-6546$.}}
\vspace{0.3cm}
\bibitem{WG}\label{WG} XIEPING WANG:
  {\it \textnormal{ Bott-Chern cohomology and the Hartogs extension theorem for pluriharmonic functions. Ann. Sc. Norm. Super. Pisa Cl. Sci. $(5)$, $25$ $(2024)$,n. $1$, pp $605-610$.}}
 
  

\end{thebibliography}
\end{document}